\numberwithin{equation}{section} \numberwithin{figure}{section}
\DeclareMathOperator{\Pic}{Pic} 
\DeclareMathOperator{\Gal}{Gal}
\newcommand{\OO}{\mathcal{O}}
\newcommand{\Esix}{{\mathbf E}_6}
\newcommand{\Eseven}{{\mathbf E}_7}
\newcommand{\E}{{\mathbf E}}
\newcommand{\Fr}{\ensuremath{\mathbf{F}}}
\newcommand{\kka}{\ensuremath{\overline{\Bbbk}}}
\newcommand\FF{\mathbb{F}}
\newcommand\PP{\mathbb{P}}
\newcommand\QQ{\mathbb{Q}}
\newtheorem{lemma}{Lemma}
\newtheorem{theorem}[lemma]{Theorem}
\numberwithin{table}{section}
\theoremstyle{definition}
\newtheorem{remark}[lemma]{Remark}
\newtheorem*{question}{Question}
\numberwithin{lemma}{section}
\newcommand{\dan}[1]{{\color{blue} \sf $\clubsuit\clubsuit\clubsuit$ Dan: [#1]}}
\begin{document}

\title
{Inverse Galois problem for del Pezzo surfaces over finite fields}

\author{Daniel Loughran}
\address{Daniel Loughran \\
School of Mathematics \\
University of Manchester \\
Oxford Road \\
Manchester \\
M13 9PL \\
UK.}
\email{daniel.loughran@manchester.ac.uk}
\urladdr{https://sites.google.com/site/danielloughran/}

\author{Andrey Trepalin}
\address{Andrey Trepalin \\
Institute for Information Transmission Problems, 19 Bolshoy Karetnyi side-str., Moscow 127994, Russia}
\address{Laboratory of Algebraic Geometry, National Research University Higher School of Economics, 6 Usacheva str., Moscow 119048, Russia}
\email{trepalin@mccme.ru}

\subjclass[2010]
{14G15  (primary), 
14J20.   
(secondary)}

\begin{abstract}
We completely solve the inverse Galois problem for del Pezzo surfaces of degree $2$ and $3$ over all finite fields.
\end{abstract}

\dedicatory{Dedicated to the memory of Sir Henry Peter Francis Swinnerton-Dyer}

\maketitle


\section{Introduction}
Recall that a \emph{del Pezzo surface} is a smooth projective surface $S$ over a field $k$ with ample anticanonical class $-K_S$. Its \emph{degree} is defined to be the integer $d = K_S^2$. One has $1 \leq d \leq 9$, with del Pezzo surfaces of lower degree generally having more complicated arithmetic and geometry. The case $d= 3$ is by far the most famous; such surfaces are exactly cubic surfaces. See \cite[\S IV]{Man86} and \cite[\S8]{Dol12} for further background on del Pezzo surfaces.

Let $S$ be a del Pezzo surface of degree $d \leq 6$ over a finite field $\FF_q$. Over the algebraic closure $S$ is the blow-up of $\PP^2$ in $9-d$  points in general position. But $S$ can be non-rational over~$\FF_q$. To understand the geometry of $S$, one uses the action of the absolute Galois group $\Gal(\overline{\FF}_q/\FF_q)$ on the Picard group $\Pic \overline{S}$. This action preserves the canonical class and the intersection pairing, and the group of such automorphisms of $\Pic \overline{S}$ is isomorphic to the Weyl group $W\left(\mathbf{E}_{9-d} \right)$ \cite[Thm.~23.9]{Man86} (we follow the conventions of \cite[\S8.2.2]{Dol12} regarding the root systems $\mathbf{E}_{9-d}$). We therefore  obtain a well-defined conjugacy class $C(S) \subset W\left(\mathbf{E}_{9-d} \right)$, induced by the image of the Frobenius element. The surfaces with trivial conjugacy class are exactly those that are blow ups of $\PP^2$ in  rational points.
A natural problem is:

\begin{question}
Given $d$ and $q$, which conjugacy classes of $W\left(\mathbf{E}_{9-d} \right)$ can arise this way?
\end{question}

We call this the \emph{inverse Galois problem for del Pezzo surfaces over finite fields}. An analogous problem for cubic surfaces over $\QQ$ has recently been completely \mbox{solved in~\cite[Theorem 0.1]{EJ15},} where it was shown that every conjugacy class of subgroups of $W(\E_6)$ indeed arises for some cubic surface over $\QQ$.

However \emph{not} every conjugacy class of $W(\E_6)$ occurs over every finite field. For example, a simple point count shows that there is no split cubic surface over $\FF_2$ (corresponding to the trivial conjugacy class), as such a surface would have more rational points than the ambient projective space $\PP^3_{\FF_2}$! (See \cite[\S2.3.3]{Ser12}).

In \cite[Thm.~1.7]{BFL16} this question was completely answered for \emph{all large}~$q$. Specifically, the density of such surfaces was explicitly calculated for $d \leq 3$, showing that
\begin{equation} \label{eqn:dens}
\lim_{q \to \infty}\frac{\#\{ S \in \mathcal{S}_d(\FF_q) : C(S) = C\}}{\#\mathcal{S}_d(\FF_q)} = \frac{\#C}{\#W(\E_{9-d})},
\end{equation}
where $\mathcal{S}_d(\FF_q)$ denotes the  set of isomorphism classes of del Pezzo surfaces of degree~$d$ over $\FF_q$. For $d > 3$ a similar formula holds, but one needs to weight each surface by the reciprocal of the size of its automorphism group.

Unfortunately the result \eqref{eqn:dens} is not effective. One therefore still needs to determine the existence of each class, with particular attention to small finite fields.

There has been a long string of papers working on this problem solving various cases (see for example \cite{BFL16, KR16, Ryb05, RT16, SD10, Tre16, Tre18}), using a range of methods:
\begin{itemize}
	\item blowing-up suitable collections of closed points;
	\item conic bundles;
	\item Galois twist by an automorphism.
\end{itemize}
The last method is very useful for del Pezzo surfaces of degrees $2$ and~$1$, as there are always non-trivial automorphisms given by the Geiser and Bertini involution, respectively.
(See \cite[\S4.1.2]{BFL16} and \cite[\S5.1.1]{BFL16} for explicit descriptions of these twists).

For $d=5,6$ there is a unique surface over the algebraic closure, and one constructs all classes over every $\FF_q$ by using twists. The case $d=4$ was finished in \cite{Tre18}. The most general results for $d=3,2$ were also obtained in \cite{Tre18}. But there remains the incomplete case $C_{14}$ for cubic surfaces (see \cite[Thm.~1.3]{Tre18}), and four incomplete cases $28$, $35$, $47$ and $56$ for del Pezzo surfaces of degree $2$ (see \cite[Thm.~1.2]{Tre18}).

In this paper, we finish off these remaining cases and give a complete classification for cubic surfaces and del Pezzo surfaces of degree $2$.

\subsection{Cubic surfaces}

The  remaining case for cubic surfaces is conjugacy class~$C_{14}$ in Swinnerton-Dyer's notation \cite[Tab.~1]{SD67}.
In \cite[Lem.~3.5 and  \S5.4]{RT16}, this class was constructed for $q \equiv 1 \bmod 6$. The construction was completely explicit; namely
$$x_0^3 = f(x_1,x_2,x_3)$$
such that  the plane cubic $f(x_1,x_2,x_3) = 0$ is sufficiently general.


However this class has resisted for other $q$.
The main difficulty is that such surfaces are minimal, and there is no non-trivial third root of unity in $\FF_q$ for \mbox{$q \equiv 2 \bmod 3$}. Moreover, computational evidence suggests that for \mbox{$q \equiv 2 \bmod 3$} there is no such surface over $\FF_q$ with non-trivial automorphism group. So the aforementioned methods of blow-ups, twists, and conic bundles, yield nothing useful.

Despite being the most difficult case, it is one of the most common conjugacy classes; in the limit as $q \to \infty$, the density of such cubic surfaces in \eqref{eqn:dens} is $1/9$, so that one expects to find such a surface very quickly. 
We show that this class in fact exists over all finite fields, which leads to the following complete classification.

We say that a  class $C_n$ of $W(\Esix)$ \emph{exists over $\FF_q$} if there is a smooth cubic surface over $\FF_q$ whose image of the Frobenius element in $W(\Esix)$ is an element of the conjugacy class $C_n$ appearing in \cite[Tab.~1]{SD67}.

\begin{theorem} \label{thm:cubic}

Each class $C_n$ of $W(\Esix)$ exists over $\FF_q$, except in exactly the following cases.

\begin{enumerate}
\item Class $C_1$ does not exist over $\FF_2$, $\FF_3$, $\FF_5$.

\item Classes $C_2$, $C_3$ do not exist over $\FF_2$, $\FF_3$.

\item Classes $C_4$, $C_5$, $C_9$, $C_{10}$ do not exist over $\FF_2$.
\end{enumerate}
In particular, classes $C_6$--$C_8$ and $C_{11}$--$C_{25}$ exist over all finite fields.
\end{theorem}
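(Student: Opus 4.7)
The plan for the non-existence assertions in parts (1)--(3) is to apply the Lefschetz trace formula. If the Frobenius class of $S$ is~$C_n$, then
\[
|S(\FF_q)| = q^2 + q\bigl(1 + a(C_n)\bigr) + 1,
\]
where $a(C_n)$ is the trace of any element of $C_n$ on the primitive Picard lattice of rank~$6$. For each listed pair I would compute this value and derive a contradiction either by showing $|S(\FF_q)| > |\PP^3(\FF_q)|$ (as already observed in the excerpt for $C_1$ over $\FF_2$), or by comparing with an upper bound coming from the $\FF_q$-rational lines on $S$ that the cycle type of $C_n$ forces; such arguments are implicit in \cite{SD67}.

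For the existence assertions, combining \cite{Tre18} with the cyclic cover construction $x_0^3 = f(x_1,x_2,x_3)$ of \cite{RT16} (which handles $C_{14}$ for $q \equiv 1 \pmod 6$) reduces the problem to realising $C_{14}$ over every $\FF_q$ with $q \not\equiv 1 \pmod 6$. My proposal is purely geometric: let $Z_1, Z_2 \subset \PP^2_{\FF_q}$ be two distinct closed points, each of residue degree~$3$, whose six geometric points lie in general position (no three collinear and not all on a conic), and take $S = \Bl_{Z_1 \cup Z_2} \PP^2_{\FF_q}$. Since $Z_1 \cup Z_2$ is $\FF_q$-rational and the general position hypothesis guarantees that $S$ is a smooth del Pezzo surface of degree~$3$ over $\FF_q$, only the Frobenius class remains to be identified.

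To identify it, I would decompose the $27$ lines on $S$ as the six exceptional divisors, the fifteen strict transforms of lines through pairs of the six points, and the six strict transforms of conics through any five of them, and tabulate the Frobenius orbits. Because Frobenius cyclically permutes $Z_1$ and $Z_2$ separately, a direct count gives: two orbits of length~$3$ among the exceptionals; $1 + 1 + 3 = 5$ orbits of length~$3$ among the joining lines (three within~$Z_1$, three within~$Z_2$, and nine joining $Z_1$ to~$Z_2$); and two further orbits of length~$3$ among the six conics, one for each closed point being omitted. The total is nine orbits of length~$3$ with no fixed line, and comparing with Swinnerton-Dyer's table \cite[Tab.~1]{SD67} this cycle type matches $C_{14}$.

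The main obstacle is ensuring the existence of two such closed points in general position for \emph{every} qualifying~$q$. For $q$ sufficiently large an elementary Lang--Weil count on the parameter space of ordered pairs of degree-$3$ closed points produces such configurations with room to spare. The hard part will be the finitely many small fields left over (in particular $\FF_2, \FF_3, \FF_4, \FF_5$ and a handful of others), for which I expect to exhibit $Z_1$ and $Z_2$ by explicit computation and check general position by hand. I anticipate $\FF_2$ to be the most delicate, as the supply of degree-$3$ points and $\FF_q$-lines in~$\PP^2_{\FF_2}$ is then smallest and the general position constraint tightest; should the blow-up construction genuinely fail over such a field, one would fall back on a direct computer search among cubic forms in four variables and verify the Frobenius class of the resulting surface using the characteristic polynomial of Frobenius on $\Pic \overline{S}$.
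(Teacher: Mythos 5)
Your construction for $C_{14}$ does not work, and the problem is visible in your own orbit count. The class $C_{14}$ consists of elements of \emph{order $9$} in $W(\Esix)$ (it is the unique class of order divisible by $9$); since $W(\Esix)$ acts faithfully on the $27$ lines, any element of $C_{14}$ must have an orbit of length $9$ there. The surface $\Bl_{Z_1 \cup Z_2}\PP^2$ you propose has Frobenius acting as a product of two disjoint $3$-cycles in $S_6 \subset W(\Esix)$ — an element of order $3$ (Carter type $2\Atwo$), consistent with your tally of nine orbits of length $3$ but not with $C_{14}$. More structurally, the six exceptional curves form a Galois-stable set of pairwise disjoint lines, so your surface is non-minimal, whereas every cubic surface of class $C_{14}$ is minimal (no orbit of length $9$ can consist of pairwise disjoint lines, as at most $6$ lines on a cubic surface are pairwise disjoint). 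This is precisely why the paper states that blow-ups, twists and conic bundles ``yield nothing useful'' for this class: \emph{no} blow-up of $\PP^2$ in closed points over $\FF_q$ can realise $C_{14}$, so your fallback of enlarging $q$ via Lang--Weil or treating small fields by hand cannot rescue the approach. The paper instead produces a Galois orbit of nine conjugate lines $E_1,\dots,E_9$ supported on three conjugate concurrent lines, shows the cubics through this configuration form a pencil with a unique singular member (the union of three planes), and deduces that a smooth member exists since $q+1\geq 3$; such a surface splits only over $\FF_{q^9}$, forcing class $C_{14}$.

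Two smaller remarks. First, the non-existence assertions in (1)--(3) are not reproved in the paper; they are quoted from \cite[Thm.~1.3]{Tre18}, so your Lefschetz-trace sketch is at best a plan for reproving known results and would need the line-count upper bounds made precise. Second, even where a cycle type on the $27$ lines is correctly computed, it does not in general pin down a conjugacy class of $W(\Esix)$ without further checking against the table; in the paper the identification of $C_{14}$ is instead made via the order of Frobenius, which is unambiguous.
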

\noindent

An expanded version of Swinnerton-Dyer's table \cite[Tab.~1]{SD67} can be found in Manin's book \cite[\S 31]{Man86}. (Swinnerton-Dyer's numbering is column $0$ of Manin's table.) Note however that Manin's table contains some mistakes; a corrected version can be found in \cite[Tab.~7.1]{BFL16}. 

\subsection{Del Pezzo surfaces of degree $2$}
The classes $47$ and $56$ can be obtained using blow-ups and twists of the last missing cubic surface  class $C_{14}$. The existence of the classes $28$ and $35$ was only unknown over $\FF_3$; we give an explicit construction of these, and thus obtain following complete classification.

We say that a class $n$ of $W(\Eseven)$ \emph{exists over $\FF_q$} if there is a smooth del Pezzo surface of degree $2$ over $\FF_q$ whose image of the Frobenius element in $W(\Eseven)$ is an element of the conjugacy class appearing in the $n$-th row of \cite[Appendix, Tab.~$1$]{Tre16}.

\begin{theorem} \label{thm:DP2}

Each class $n$ of $W(\Eseven)$ exists over $\FF_q$, except in exactly the following cases.

\begin{enumerate}
\item Classes $1$ and $49$ do not exist over $\FF_2$, $\FF_3$, $\FF_4$, $\FF_5$, $\FF_7$, $\FF_8$.

\item Classes $5$ and $10$ do not exist over $\FF_2$, $\FF_3$, $\FF_4$, $\FF_5$.

\item Classes $2$, $3$, $18$ and $31$ do not exist over $\FF_2$, $\FF_3$, $\FF_4$.

\item Classes $4$, $6$--$9$, $12$--$14$, $17$, $21$, $22$, $25$, $28$, $32$, $33$, $35$, $38$, $40$, $50$, $53$, $55$, $60$ do not exist over $\FF_2$.

\end{enumerate}
In particular, classes $11$, $15$, $16$, $19$, $20$, $23$, $24$, $26$, $27$, $29$, $30$, $34$, $36$, $37$, $39$, $41$--$48$, $51$, $52$, $54$, $56$--$59$ exist over all finite fields.

\end{theorem}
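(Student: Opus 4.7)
The plan is to combine existing results for del Pezzo surfaces of degree $2$ with Theorem~\ref{thm:cubic}, and to handle the last two classes over $\FF_3$ by direct construction. By \cite[Thm.~1.2]{Tre18} the non-existence claims in items~(1)--(4) all hold, and every existence claim is already established there except for the four classes $28$, $35$, $47$, and $56$; moreover classes $28$ and $35$ are known to exist over every $\FF_q$ with $q\neq 3$. Thus the proof reduces to two independent tasks: producing classes $47$ and $56$ over every finite field, and producing classes $28$ and $35$ over $\FF_3$.

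For classes $47$ and $56$ I would leverage Theorem~\ref{thm:cubic}: over every $\FF_q$ there is a smooth cubic surface $X/\FF_q$ with Frobenius class $C_{14}$. Such an $X$ is minimal, so none of its $27$ exceptional lines is $\FF_q$-rational; an $\FF_q$-point lying on an exceptional line must therefore lie in the intersection of that line with one of its Galois conjugates, of which there are only finitely many per line. The Lefschetz formula
$$\#X(\FF_q) = 1 + q\,\mathrm{tr}(\Frob \mid \Pic \overline{X}) + q^2,$$
combined with the character value of $C_{14}$, then shows that for every $q$ there is a point $P\in X(\FF_q)$ lying off all $27$ lines. Blowing up such a point yields a del Pezzo surface $S=\Bl_P X$ of degree $2$ whose Frobenius corresponds to the image of $C_{14}$ under the embedding $W(\Esix)\hookrightarrow W(\Eseven)$ fixing the new exceptional divisor, and a table check in \cite[Appendix, Tab.~1]{Tre16} identifies this image as one of the classes $47$ or $56$. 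To produce the other I would twist $S$ by the Geiser involution $\iota_G$: since $\iota_G$ acts as $-\mathrm{id}$ on $K_S^\perp\subset \Pic \overline{S}$, this sends the Frobenius class $[g]$ to $[-g]$, and provided $[-g_{47}]=g_{56}$ (an explicit Weyl-group check), the twist supplies the remaining class.

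For classes $28$ and $35$ over $\FF_3$ I would exhibit explicit smooth plane quartics $f_{28},f_{35}\in\FF_3[x_0,x_1,x_2]$ whose associated double covers
$$S_i : w^2=f_i(x_0,x_1,x_2)\ \subset\ \PP(1,1,1,2)$$
realise the required conjugacy classes. A conjugacy class in $W(\Eseven)$ is determined, up to a short list of coincidences, by the characteristic polynomial of Frobenius on $\Pic\overline{S}_i$, which is itself determined by the point counts $\#S_i(\FF_{3^n})$ for a few small $n$ via Lefschetz. The construction therefore reduces to a finite search among smooth plane quartics over $\FF_3$ targeted at the point counts predicted in \cite[Appendix, Tab.~1]{Tre16}, with any residual ambiguity between classes sharing a characteristic polynomial resolved by analysing the Galois orbit structure on the $56$ exceptional $(-1)$-curves of~$S_i$.

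The serious input is Theorem~\ref{thm:cubic}; once the cubic class $C_{14}$ is available over every finite field, the transfer to classes $47$ and $56$ is essentially formal, and classes $28,35$ over $\FF_3$ succumb to a finite computer search. Within the present theorem the most delicate step is thus the bookkeeping identification: one must ensure that the embedding $W(\Esix)\hookrightarrow W(\Eseven)$ coming from blowing up a point on a $C_{14}$-cubic really lands in the pair $\{47,56\}$, and pin down which of the two it produces before the Geiser twist, so that Geiser-twisting genuinely supplies the missing class.
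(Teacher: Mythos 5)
Your reduction and your treatment of classes $47$ and $56$ match the paper's proof almost exactly: both start from \cite[Thm.~1.2]{Tre18}, blow up a rational point on a $C_{14}$-cubic supplied by Theorem~\ref{thm:cubic}, identify the resulting class via the order-$9$ subgroup of $W(\Eseven)$, and Geiser-twist to get the partner class. Two small remarks there. First, your Lefschetz count to find a point off the lines is more than is needed: since Frobenius permutes the $27$ lines of a $C_{14}$-surface in orbits of length $9$ and at most three lines pass through any point of a smooth cubic surface, \emph{no} rational point lies on a line, so Chevalley--Warning alone suffices (this is what the paper does). Second, your parenthetical that classes $28$ and $35$ ``are known to exist over every $\FF_q$ with $q\neq 3$'' contradicts item~(4) of the statement (they do not exist over $\FF_2$); harmless, since the nonexistence is part of the cited prior work, but worth correcting.

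Where you genuinely diverge is classes $28$ and $35$ over $\FF_3$. You propose double covers $w^2=f(x_0,x_1,x_2)$ found by a point-count-targeted search, with ambiguities resolved by the orbit structure on the $56$ exceptional curves. The paper instead notes that $28$ and $35$ are Geiser twists of each other, so only $35$ is needed, and exhibits a single explicit surface of bidegree $(2,2)$ in $\PP^1\times\PP^2$ carrying a relatively minimal conic bundle; Rybakov's theorem \cite[Thm.~2.6]{Ryb05} then reads off the Frobenius eigenvalues $1,-1,-1,i,-i,i,-i$ directly from the degrees of the closed points supporting the singular fibres, and a table check shows class $35$ is the unique class with that characteristic polynomial. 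Your route would also work in principle (the characteristic polynomial does pin down class $35$), but as written it defers the essential content --- the actual quartic and the verification that the search terminates successfully --- to an unexecuted computation, whereas the conic-bundle approach replaces point counting over several extensions of $\FF_3$ by a one-line fibre-degree bookkeeping. You should either carry out the search and record a witness, or adopt the conic-bundle shortcut; until one of these is done, the $\FF_3$ case of classes $28$ and $35$ is the one genuinely unproven step in your proposal.
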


The table \cite[Appendix, Tab.~$1$]{Tre16} is based upon Carter's \cite[Tab. $10$]{Car72}. Urabe \cite[Tab.~1]{Ura96a} also compiled a table of the possibilities for del Pezzo surfaces of degree $2$; but note that the rows in Urabe's table are ordered differently to Carter's table.

Knowledge about the possibilities for the Galois action of del Pezzo surfaces over small finite fields is often required in proofs of some results concerning properties of del Pezzo surfaces (see e.g.~the proof of \cite[Thm.~1]{STVA14}). We hope that the results in this paper should assist with such arguments in the future.

\subsection*{Acknowledgements} We thank Costya Shramov and Yuri Prokhorov for organising the ``Workshop on birational geometry'' at the NRU HSE, Moscow, in October 2018, where this work was commenced. We also thank Sergey Rybakov for useful discussions about the remaining case $C_{14}$, and the referee for helpful comments. The first-named author is supported by EPSRC grant EP/R021422/1. The second-named author is supported by the Russian Academic Excellence Project '5--100' and Young Russian Mathematics award.

\section{Cubic surfaces --- Proof of Theorem \ref{thm:cubic}}
As explained in the introduction, by \cite[Thm.~1.3]{Tre18} it suffices to construct the remaining case $C_{14}$ over every finite field. To do so we require the following lemma.

\begin{lemma}
\label{cubiclemma}
Let $k$ be an algebraically closed field and let $\Pi_1$, $\Pi_2$ and $\Pi_3$ be three planes in~$\PP^3_{k}$ which have exactly one common point $Q$. Let $C_1$, $C_2$ and $C_3$ be three cubic curves lying in $\Pi_1$, $\Pi_2$ and $\Pi_3$ respectively, such that $\Pi_i \cap C_j = \Pi_j \cap C_i$ and such that  $Q \notin C_i$ for any $i,j$.  Then there is one-dimensional family of cubic surfaces over $k$ containing $C_1$, $C_2$ and $C_3$.

\end{lemma}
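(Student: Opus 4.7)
The plan is to work in coordinates adapted to the configuration and translate the geometric compatibility into a linear-algebra problem on cubic forms, where the lemma reduces to a clean dimension count.

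First I would choose projective coordinates on $\PP^3_k$ so that $Q = [1:0:0:0]$ and $\Pi_i = \{x_i = 0\}$ for $i = 1, 2, 3$. Then each $C_i$ is cut out in $\Pi_i$ by a cubic form $f_i \in k[x_0, x_j, x_k]_3$ (with $\{j,k\} = \{1,2,3\}\setminus\{i\}$); since $Q \notin C_i$, the coefficient of $x_0^3$ in $f_i$ is nonzero and in particular $f_i|_{L_{ij}}$ is a nonzero cubic on $L_{ij} := \Pi_i \cap \Pi_j$. The hypothesis $\Pi_i \cap C_j = \Pi_j \cap C_i$ says $f_i|_{L_{ij}}$ and $f_j|_{L_{ij}}$ define the same divisor on $L_{ij}$, hence agree up to a scalar; after rescaling the $f_i$ I may assume $f_i|_{L_{ij}} = f_j|_{L_{ij}}$ for every pair. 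A cubic surface $V(F)$ then contains $C_i$ if and only if $F|_{x_i = 0} = \lambda_i f_i$ for some $\lambda_i \in k$, and the normalization forces $\lambda_1 = \lambda_2 = \lambda_3 =: \lambda$, so I am reduced to parametrizing pairs $(F, \lambda)$ satisfying $F|_{x_i = 0} = \lambda f_i$ for all $i$.

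The main step is then a dimension count. Let $W \subset \bigoplus_{i=1}^3 k[x_0, \ldots, \widehat{x_i}, \ldots, x_3]_3$ be the subspace of triples $(g_1, g_2, g_3)$ satisfying the three pairwise compatibilities $g_i|_{L_{ij}} = g_j|_{L_{ij}}$. Each compatibility is a codimension-$4$ condition (the dimension of cubics on a line), and the only dependence among them is a single relation at the common point $Q \in L_{12} \cap L_{13} \cap L_{23}$, where the three equalities of $x_0^3$-coefficients $g_i(Q) = g_j(Q)$ form a rank-$2$ system. This gives $\dim W = 30 - (12-1) = 19$. The restriction map $\pi\colon k[x_0, \ldots, x_3]_3 \to W$, $F \mapsto (F|_{x_i=0})_i$, has kernel the one-dimensional space $k \cdot x_1 x_2 x_3$ (any such $F$ must be divisible by each $x_i$, hence by their product, and has the right degree). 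Therefore the image of $\pi$ has dimension $20 - 1 = 19 = \dim W$, so $\pi$ is surjective. For each $\lambda \in k$ the fibre $\pi^{-1}(\lambda f_1, \lambda f_2, \lambda f_3)$ is a one-dimensional affine space of cubic forms; letting $\lambda$ vary yields a two-dimensional affine family of forms $F$, which projectivizes to the claimed $\PP^1$-family of cubic surfaces containing $C_1 \cup C_2 \cup C_3$.

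I expect the main technical point to be the careful bookkeeping of redundancies in the dimension of $W$: among the twelve scalar equations coming from the three line compatibilities, exactly one is superfluous, arising precisely because all three lines $L_{ij}$ pass through $Q$, so the three pairwise equalities of $x_0^3$-coefficients have rank $2$ rather than $3$. Once this is settled, the surjectivity of $\pi$ is forced by rank-nullity and the conclusion is immediate. A more invariant alternative would be to compute $\chi(\mathcal{O}_Z(3)) = 18$ via a Mayer--Vietoris argument for $Z = C_1 \cup C_2 \cup C_3$ and then verify $H^1(\mathcal{O}_Z(3)) = 0$, yielding $h^0(\mathcal{I}_Z(3)) \geq 2$; but the explicit coordinate approach sketched here is more elementary and self-contained.
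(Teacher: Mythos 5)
Your proof is correct, and the dimension count checks out: the three pairwise compatibility conditions impose $3\times 4=12$ linear equations on the $30$-dimensional space of triples, with exactly one dependency (the three equalities of $x_0^3$-coefficients at $Q$ have rank $2$, and no other monomial appears in more than one condition), so $\dim W=19$; combined with $\ker\pi=k\cdot x_1x_2x_3$ and rank--nullity this forces $\pi$ to be surjective, and the fibre over the line $k\cdot(f_1,f_2,f_3)$ is the desired two-dimensional space of forms, i.e.\ a pencil of cubic surfaces. The paper shares your setup exactly (same coordinates, same normalisation of the $x_0^3$-coefficients, same identification of the kernel as $x_1x_2x_3$) but handles the one genuinely nontrivial step --- the existence of a single cubic form restricting to $f_i$ on each $\Pi_i$ --- constructively rather than by dimension count: it writes down the inclusion--exclusion expression $P=P_1+P_2+P_3-P_1(x_0,0,x_3)-P_2(x_0,x_1,0)-P_3(x_0,0,x_2)+x_0^3$ and verifies directly that $P|_{x_i=0}=P_i$, then takes the pencil $\mu x_1x_2x_3=\lambda P$. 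Your argument proves slightly more (every compatible triple of plane cubics is simultaneously interpolable, not just the given one) and generalises more readily, at the cost of the bookkeeping of redundancies; the paper's explicit formula is shorter to verify and hands you concrete equations for the family, which is convenient in the subsequent application where the members of the pencil over $\FF_q$ are examined individually.
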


\begin{proof}
We can assume the planes are given by $\Pi_i : x_i= 0$, so $Q=(1:0:0:0)$. The cubic curves are given by $C_i: P_i(x_0,\ldots, \widehat{x_i}, \dots, x_3) = 0$. Each polynomial $P_i$ contains the monomial $x_0^3$, since $Q \notin C_i$, so we may assume that the coefficient of~$x_0^3$ is $1$ for each polynomial $P_i$. 
Put
\begin{align*}
P(x_0,x_1, x_2,x_3) &= P_1(x_0,x_2,x_3) + P_2(x_0,x_1,x_3) + P_3(x_0,x_1,x_2) \\
 &- P_1(x_0,0,x_3) - P_2(x_0,x_1,0) - P_3(x_0,0,x_2) + x_0^3.
\end{align*}
Then one has
$$
P(x_0,0,x_2,x_3) = P_1(x_0,x_2,x_3), \qquad P(x_0,x_1,0,x_3) = P_2(x_0,x_1,x_3),
$$
$$
P(x_0,x_1,x_2,0) = P_3(x_0,x_1,x_2),
$$
\noindent since $P_1(x_0,0,0) = P_2(x_0,0,0) = P_3(x_0,0,0) = x_0^3$ and
$$
P_1(x_0,0,x_3)= P_2(x_0, 0, x_3), P_1(x_0,x_2,0) = P_3(x_0,0,x_2), P_2(x_0,x_1,0) = P_3(x_0,x_1,0).
$$
Therefore for any $(\mu : \lambda) \in \PP^1(k)$ the equation $\mu x_1x_2x_3 = \lambda P(x_0,x_1,x_2,x_3)$ gives a cubic surface passing through the curves $C_1$, $C_2$ and $C_3$.
\end{proof}

We  apply this as follows. Let $\FF_q$ be a finite field and $\Fr$ the Frobenius automorphism. Let $L_1$, $L_2$ and $L_3$ be a triple of conjugate non-coplanar lines in $\PP^3_{\FF_{q^3}}$ passing through a common  point $Q \in \PP^2(\FF_q)$. Consider a point $p_1 \in L_1(\FF_{q^9}) \setminus L_1(\FF_{q^3})$, and its conjugates $p_{i + 1} = \Fr^i p_1$, $i \in \{2, \ldots, 9\}$. 
One has $p_{i}, p_{i+3}, p_{i+6} \in L_i$ (here and in what follows,  subscripts are taken modulo $9$).
 Consider the line $E_1$ passing through $p_1$ and $p_2$, and its conjugates $E_{i + 1} = \Fr^i E_1$, $i \in \{2, \ldots, 9\}$. Denote this configuration of nine conjugate lines by $\mathcal{E}$. We want to show that there exists a smooth cubic surface $X$ over $\FF_q$ containing $\mathcal{E}$.

The configuration $\mathcal{E}$ is defined over $\FF_q$, in particular the linear system of cubics containing it is defined over $\FF_q$.
For $\{i,j,k\} = \{1,2,3\}$, let $\Pi_i$ be the plane spanned by $L_j$ and $L_k$.
Then $E_{i+1}, E_{i+4}, E_{i+7} \subset \Pi_i$ and $\Pi_i \cap \Pi_j \cap \mathcal{E} = \{p_{k}, p_{k+3}, p_{k+6}\}$.
 Therefore by Lemma \ref{cubiclemma} there is one-dimensional family $\mathcal{X}$ of cubic surfaces over~$\FF_q$ passing through $\mathcal{E}$. We now show that there is a unique singular member.

\begin{lemma}
\label{cubicsing}
There is a unique singular cubic surface over $\FF_q$ containing the configuration~$\mathcal{E}$. This surface is the union of the planes $\Pi_1$, $\Pi_2$ and $\Pi_3$.
\end{lemma}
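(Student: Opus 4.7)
The plan is to argue existence and uniqueness separately. For existence I observe that $\Pi_1 \cup \Pi_2 \cup \Pi_3$ is a degree-three hypersurface defined over $\FF_q$ as a union of Galois-conjugate planes, it contains every line $E_j$ (since each lies in some $\Pi_i$), and it is singular along $L_1 \cup L_2 \cup L_3$, so it is a singular $\FF_q$-rational member of $\mathcal{X}$.

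For uniqueness, let $X$ be any $\FF_q$-rational singular cubic containing $\mathcal{E}$; I aim to conclude $X = \Pi_1 \cup \Pi_2 \cup \Pi_3$. The central observation is that $X \cap \Pi_i$ contains the three coplanar lines $E_{i+1}, E_{i+4}, E_{i+7}$, so either $\Pi_i \subseteq X$ or, by matching degrees, $X \cap \Pi_i = E_{i+1} \cup E_{i+4} \cup E_{i+7}$. If $X$ is reducible (a plane plus a quadric or a union of three planes), an incidence count on the nine lines forces every plane component of $X$ to coincide with some $\Pi_i$: a plane $H \ne \Pi_i$ meets each $\Pi_i$ in a single line and so contains at most one $E_j$ per $\Pi_i$, and three such $E_j$'s from distinct $\Pi_i$'s cannot be pairwise incident (which coplanarity requires), since two $E_j$'s meet only when their indices differ by $1 \bmod 9$ and no three indices taken one each from $\{2,5,8\}$, $\{3,6,9\}$, $\{1,4,7\}$ are pairwise consecutive modulo $9$. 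Similarly an irreducible quadric meets each $\Pi_i$ in a conic and so cannot contain all three of $E_{i+1}, E_{i+4}, E_{i+7}$. Hence the only reducible option is $X = \Pi_1 \cup \Pi_2 \cup \Pi_3$.

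If $X$ is irreducible, its singular locus is a finite set of at most four points by the classification of singular cubic surfaces. A singular point $y$ of $X$ lying in some $\Pi_i$ is a singular point of the plane cubic $X \cap \Pi_i$, hence a pairwise intersection of two of the lines $E_{i+1}, E_{i+4}, E_{i+7}$. These produce nine points across the three planes, and Frobenius cycles them as a single orbit of length nine; the Galois-stability of $\mathrm{Sing}(X)$ would then force nine singular points on $X$, contradicting the bound of four. The step I expect to be the main obstacle is the remaining case of a singular point $y$ of $X$ lying off every $\Pi_i$. Here the plan is to exploit the explicit pencil $F_t = F_0 + t\, x_1 x_2 x_3$ from Lemma \ref{cubiclemma}: such a $y$ satisfies $y_1 y_2 y_3 \ne 0$ together with $\partial_0 F_0(y) = 0$ and $y_1 \partial_1 F_0(y) = y_2 \partial_2 F_0(y) = y_3 \partial_3 F_0(y)$, which determines the corresponding value of $t$, and I would argue---either by a discriminant computation on the pencil or by tracking the Galois action on solutions of this system---that every such $t$ lies in a Galois orbit of size greater than one and so is not $\FF_q$-rational.
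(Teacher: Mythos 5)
Your existence argument and your treatment of geometrically reducible $X$ are fine (the incidence count showing that a plane other than the $\Pi_i$ contains at most two of the nine lines, and an irreducible quadric at most six, does force $X=\Pi_1\cup\Pi_2\cup\Pi_3$; just note that two $E_j$'s also meet when their indices differ by $3$ bmod $9$, which is harmless since you only invoke incidence for lines from distinct $\Pi_i$). The problem is the irreducible case, where your argument is incomplete in two ways. The decisive one is the subcase you yourself flag: a singular point lying off all three planes. What you offer there is a plan, not a proof --- you write down the correct system $\partial_0F_0(y)=0$, $y_1\partial_1F_0(y)=y_2\partial_2F_0(y)=y_3\partial_3F_0(y)$, and then assert that you ``would argue'' every resulting parameter $t$ has a Galois orbit of size greater than one. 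No mechanism is given for why this should hold, and it is not a formality: showing that a discriminant-type polynomial has no $\FF_q$-roots is exactly the kind of statement the whole lemma is designed to avoid having to prove by computation, and nothing in the symmetry of the configuration obviously prevents an off-plane singular point from being defined over a small field. As it stands the lemma is not proved.

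There are also two smaller leaks in the on-plane subcase. First, ``at most four singular points'' is a statement about normal cubic surfaces with rational double points; an irreducible non-normal cubic is singular along a line, and a cone over a plane cubic is also irreducible, so these must be excluded separately before the count $9>4$ gives a contradiction. Second, your nine intersection points $E_j\cap E_{j+3}$ form a single Frobenius orbit of length nine only if they are pairwise distinct; if $E_{i+1},E_{i+4},E_{i+7}$ happen to be concurrent in $\Pi_i$, the orbit collapses to three points, which is perfectly consistent with a $3$-nodal cubic, and nothing in the construction of $\mathcal{E}$ (an arbitrary $p_1\in L_1(\FF_{q^9})\setminus L_1(\FF_{q^3})$) rules this out without a genericity argument. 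For comparison, the paper avoids the case split on the location of the singular point entirely: for \emph{any} geometric singular point $S$ it picks a line $E\in\mathcal{E}$ not through $S$, uses the three points where $E$ meets other lines of $\mathcal{E}$ to produce at least four lines of $X$ inside the plane $\langle E,S\rangle$, concludes that this plane lies in $X$, and then lets conjugacy of the $E_i$ force $X=\Pi_1\cup\Pi_2\cup\Pi_3$. Whatever you do with the off-plane case, a uniform argument of this kind is what you should be aiming for.
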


\begin{proof}
Let $X$ be a singular surface containing $\mathcal{E}$, and $S$ a singular point of $X_{\overline{\FF}_q}$. There is a line $E$ in $\mathcal{E}$ that does not pass through $S$. Without loss of generality, we may assume that $E=E_1$. Then $E_1$ meets the other lines in $\mathcal{E}$ in at least $3$ points, as it meets the lines $E_2$, $E_9$ and $E_4$ in distinct points.
Thus any line in~$\PP^2$ passing through one of these points and $S$ meets both points in multiplicity at least~$2$, hence is contained in $X$. Thus the plane spanned by $E$ and $S$ contains at least~$4$ lines in~$X$, therefore this plane lies in $X$. 
As the lines $E_i$ are conjugate, the only possibility is that $X$ is the union of planes of planes $\Pi_1$, $\Pi_2$ and $\Pi_3$.
\end{proof}


So by Lemmas \ref{cubiclemma} and \ref{cubicsing} there exists a smooth cubic surface $X$ containing the configuration $\mathcal{E}$, as $\mathcal{X}$ has $q+1 \geq 3$ elements. Any such cubic surface contains nine conjugate lines. Therefore it splits after a field extension of degree at least~$9$, hence its conjugacy class in $W(\Esix)$ consists of elements of order divisible by $9$. However from the classification \cite[Tab.~1]{SD67}, one can see that $C_{14}$ is the unique such class (in fact its elements have order exactly $9$). Thus $X$ has the claimed class. \qed

\section{Del Pezzo surfaces of degree $2$ --- Proof of Theorem \ref{thm:DP2}}
By Theorem \ref{thm:cubic}, over every finite field $\FF_q$ there exists a cubic surface $S$ with class~$C_{14}$. But every cubic surface over a finite field has a rational point, by the Chevalley--Warning theorem \cite[\S2.2]{Ser73}. Moreover, by a consideration of the Galois action on the lines, no rational point lies on a line over $\overline{\FF}_q$. Thus the blow-up of $S$ in a rational point  is a del Pezzo surface of degree~$2$. This constructs the missing class $47$ over $\FF_q$, since this class corresponds to the unique subgroup of order $9$ in~$W(\Eseven)$. One then performs a Geiser twist to get the class $56$ (see \cite[Table 1]{Tre18}).

By \cite[Thm.~1.2]{Tre18}, this leaves open the existence of classes $28$ and $35$ over~$\FF_3$. These surfaces are Geiser twists of each other (see \cite[Table 1]{Tre18}), thus it suffices to construct the class~$35$. We claim that this is realised by the surface
$$X: (x^2 + xz - z^2)s^{2} + (x^2 + y^2)st + (x^2 - xy - y^2 + xz - z^2)t^{2} = 0\subset \PP^1_{s,t} \times \PP^2_{x,y,z}.$$
We first prove that $X$ is a del Pezzo surface of degree $2$. First, this surface is easily checked to be smooth over $\FF_3$. By the adjunction formula the anticanonical bundle is $\OO_X(0,1)$, thus the anticanonical map is given by the projection onto the second factor $X \to \PP^2$. This map is proper and a simple calculation shows that it is quasi-finite, hence it is finite. Thus the anticanonical bundle, being the pull-back of an ample line bundle by a finite morphism, is ample. Thus $X$ is del Pezzo, and from the equation it is obvious that it has degree $2$.

Next, the projection onto the first factor equips $X$ with the structure of a conic bundle. The singular fibres lie over the closed points of $\PP^1$ given by $s=0$, $t= 0$, $s^2 + t^2 = 0$ and $s^2 + st - t^2= 0$, moreover one checks that these singular fibres are all irreducible, so that $X \to \PP^1$ is relatively minimal. Therefore from \cite[Thm.~2.6]{Ryb05}, one finds that the Frobenius element acts on the Picard group $\Pic \overline{X}$ with eigenvalues $1,-1,-1,i,-i,i,-i$. However, an inspection of \cite[Appendix, Tab.~$1$]{Tre16} reveals that class $35$ is the only one with this property, hence the class is $35$, as claimed. \qed

\begin{remark}
In fact any del Pezzo surface of degree $2$ with a conic bundle structure may be embedded as a surface of bidegree $(2,2)$ in $\PP^1 \times \PP^2$; see \cite[Thms.~5.6, 5.7]{FLS18}.
\end{remark}

\end{document}